\documentclass[reqno,14pt]{amsart}

\usepackage[left=2.5cm, top=3cm, right=2.5cm, bottom=3cm]{geometry}
\usepackage[utf8]{inputenc}
\usepackage{amssymb,bbm}
\usepackage{amsfonts}
\usepackage{amsmath, amscd,amsthm}
\usepackage{graphicx}
\usepackage{color}
\usepackage{enumerate}
\usepackage{stmaryrd}
\usepackage{bm}
\usepackage{mathtools}
\usepackage{fancyvrb}
\usepackage{soul}

\def \N{\mathbb{N}}

\def \Z{\mathbb{Z}}
\def \V{\mathbb{V}}

\def \P{\mathbb{P}}
\newcommand{\E}{\mathbb{E}}
\newcommand{\Var}{\V\mathrm{ar}}

\newtheorem{lemma}{Lemma}

\newtheorem{proposition}{Proposition}
\newtheorem{theorem}{Theorem}

\newtheorem*{remark}{Remark}

\date{}

\subjclass[2020]{60G40, 60G50, 60E15}
\keywords{elephant random walk, coupling, escape times, gambler's ruin, sharp estimates}

\title{Estimates on Escape Times for the Elephant Random Walk}

\author{Morgan Andr\'e}
\address{Morgan Andr\'e
		\newline
		Universidade de S\~ao Paulo,
		\newline  R. do Mat\~ao, 1010 - Butant\~a, S\~ao Paulo - SP, CEP: 05508-090, Brasil.
		\newline
		e-mail: {\rm \texttt{morgan@ime.usp.br }}
        }

\author{Leonel Zuazn\'abar }
\address{Leonel Zuazn\'abar
		\newline
		Universidade Federal do ABC.,
		\newline Av. dos Estados, 5001 - Bangú, Santo André - SP, CEP: 09280-560, Brasil.
		\newline
		e-mail: {\rm \texttt{l.zuaznabar@ufabc.edu.br}}
        }

\begin{document}

\maketitle

\begin{abstract}
We study the gambler's ruin problem for the Elephant Random Walk, focusing on escape time from a symmetric interval of the form $\{-N, \ldots, N\}$. As our main result, we derive tight exponential bounds for the tail of this escape time. We then illustrate the usefulness of such bounds by proving that, in the diffusive regime, the Elephant's average behavior mirrors that of the traditional symmetric random walk: the expected escape time grows quadratically with $N$.
\end{abstract}

\vspace{2 cm}

\begin{center}
\centering

\end{center}

\normalsize

\section{Introduction}

The gambler’s ruin may be the oldest problem in probability theory, dating back to a letter from Blaise Pascal to Pierre de Fermat in 1656. In the classical gambler's ruin, a gambler is betting on the outcome of a possibly biased coin, winning or loosing one dollar at each step, with probability $p$ and $1-p$ respectively, independently of what happened up to the current step. Assume that the gambler starts with fortune $N$ and stop either when the totality of its fortune has been lost, or when it has doubled. For simplicity we adopt the representation in which $S_n$ represents the gambler's gains (when $S_n \geq 0$) or losses (when $S_n \leq 0$) with $S_0=0$. Thus $(S_n)_{n \ge 0}$ is a Simple Random Walk (SRW) on the one-dimensional integer lattice, starting at the origin. Moreover we denote the game's duration by $\sigma_N = \inf\left\{n\ge 1: \left|S_n\right| = N \right\}$. One of the main object of interest is the expected duration of the game, and, in this standard (Markovian and time-homogeneous) setting the problem has long been fully and analytically solved. Classical results\footnote{See for example \cite{feller}, pages 348 and 349.} tell us that $\E[\sigma_N]$ is of quadratic order in the symmetric case, while it is of linear order in the asymmetric case. More precisely, if $p=1/2$, then $\E \left[\sigma_N \right] = N^2$, otherwise: $$\E \left[\sigma_N \right] = \frac{N}{2p - 1} \left( \frac{1 - \left( \frac{1-p}{p}\right)^N}{1 + \left( \frac{1-p}{p}\right)^N}\right).$$
The proofs of these closed-form formulas are highly dependent on the Markovian and time-homogeneous nature of the dynamic. In the present work, we develop an approach allowing the analysis of a case in which the random walk has a past-dependent dynamic. Namely, we consider the problem above when the classical simple random walk is replaced by the elephant's random walk.

\vspace{0.2 cm}

The Elephant's Random Walk (ERW) is a discrete time stochastic process introduced by Schütz and Trimper \cite{schutz2004elephants} as a paradigmatic example of past-dependent random walks. It is both quantitatively tractable and qualitatively rich---exhibiting in particular a phase transition from diffusive to super-diffusive behavior---and has therefore recently become the standard choice for testing classical results outside the traditional Markovian setting, when memory is allowed to play a role. It can be defined as follows. Let $(Z_n)_{n \geq 0}$ be a stochastic chain with value in $\Z$, making jumps of unit size---either upward or downward---at each time step. It can be defined, like classical random walks, as the partial sum of a sequence $(X_n)_{n \geq 0}$ of random variables: $$Z_n = \sum_{k=0}^n X_k,$$ with the crucial difference that the $(X_n)_{n \geq 0}$ are not independent. Conventionally $X_0 = 0$, and the first jump leads to either $X_1 = 1$ or $X_1 = -1$, with probabilities that are unimportant for our purposes, since in either cases $|Z_1|=1$. Then, for any $n \geq 1$, the jump at time $n+1$ is given by:
$$X_{n+1} = \begin{cases} X_{K(n)} &\text{ with probability } p,\\
-X_{K(n)} &\text{ with probability } 1-p,
\end{cases}$$
where $(K(n))_{n \geq 1}$ is a sequence of independent random variables, $K(n)$ having uniform distribution on $\{1, \ldots, n\}$. In words, at each time step, the ERW selects a step uniformly at random from its entire past and repeats it with probability $p$, or takes the opposite step with probability $1 - p$. The parameter $p$ is often referred to as the \textit{memory} of the elephant: when $p$ is close to $1$, the dependence on the past tends to persist over a long period, whereas it fades rapidly when $p$ is close to $0$.

\vspace{0.2 cm}

Much effort has been done during the last two decades to understand the effect of the memory of the elephant on its asymptotic behavior, as compared to classical Markovian random walks, with a particular focus on laws of large numbers, central limit theorems and functional limit theorems---see for example \cite{baur2016elephant, bercu, coletticlt, coletticlt2}. Results on hitting times have been obtained, but mostly from the view point of \textit{return times} and recurrence/transience---see \cite{bertoinZeros, colettiRec}. Surprisingly, such natural questions as the gambler's ruin problem---in other words, the question of \textit{escape times}---has, to the best of our knowledge, never been addressed for the ERW, even though it was briefly mentioned in \cite{stadtmuller} and \cite{pozdnyakov}.

\vspace{0.2 cm}

The question is whether or not the classical behavior mentioned at the beginning of this section remains when $(S_n)_{n \ge 0}$ is replaced by $(Z_n)_{n \ge 0}$. For concreteness, one can imagine a gambling game in which the coin is replaced by Pólya's urn\footnote{See \cite{baur2016elephant} for details on the connection between ERW and Pólya's urn.}. More precisely, suppose an urn contains initially a unique ball, which is either green or red. Then at any time $n \geq 1$ a ball is drawn at random (uniformly) from the urn, and then put back in with another ball, which has either the same color or the other color, with probability $p$ and $1-p$ respectively. Then replacing $(S_n)_{n \geq 0}$ by $(Z_n)_{n \geq 0}$ is the same as saying that the gambler is betting on the outcome of the urn sampling, and win or loose one dollar at time $n$ depending on whether the ball drawn at time $n$ is green or not. Define:

\begin{equation*}
    \tau_N = \inf\left\{k\ge 1: |Z_k| = N\right\}.
\end{equation*}

\vspace{0.2 cm}

It is well-known (see \cite{schutz2004elephants}) that the behavior of the ERW critically depends on the value of $p$, with a phase transition at $3/4$. $(Z_n)_{n \geq 0}$ is said to be in the \textit{diffusive regime} when $p < 3/4$, and in the \textit{super-diffusive} regime when $p > 3/4$. We prove that, in the diffusive regime, the expectation of the game duration exhibits similar quadratic behavior as in the classical symmetric gambler's ruin for big $N$. More precisely, we prove the following.

\begin{proposition}\label{prop_1}
    If $0<p<3/4$ then there exists a constant $\theta$ such that: $$\E[\tau_N] \underset{N \rightarrow\infty}{\sim} \theta N^2. $$
\end{proposition}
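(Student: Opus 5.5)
The plan is to identify $\theta$ as the expected exit time of the scaling limit of the ERW, and to upgrade convergence in distribution of $\tau_N/N^2$ to convergence of expectations by means of an exponential tail bound. Throughout, $a := 2p-1$, so that $a < 1/2$ in the diffusive regime.

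\emph{Step 1: scaling limit.} I would invoke the functional limit theorem for the diffusive ERW (Baur--Bertoin \cite{baur2016elephant}, Coletti et al.\ \cite{coletticlt}). It states that, in the Skorokhod space $D[0,\infty)$,
\[
\Bigl(\tfrac{1}{N} Z_{\lfloor N^2 t\rfloor}\Bigr)_{t\ge 0} \Longrightarrow (W_t)_{t\ge0}, \qquad W_t = \frac{t^{a}\, B\bigl(t^{1-2a}\bigr)}{\sqrt{1-2a}},
\]
where $B$ is a standard Brownian motion, so that $\operatorname{Var}(W_t) = t/(3-4p)$. I then set
\[
\theta := \E\bigl[T\bigr], \qquad T = \inf\{t>0 : |W_t| = 1\}.
\]

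\emph{Step 2: continuous mapping.} Consider the functional $\Phi(f) = \inf\{t : |f(t)| \ge 1\}$. Since the ERW has unit steps, $\Phi$ applied to the rescaled path equals $\tau_N/N^2$ up to an error of $O(N^{-2})$. The functional $\Phi$ is continuous at every continuous path $f$ such that $|f|$ exceeds $1$ immediately after its first hitting time of level $1$. The process $W$ is a deterministic time change of $B$, multiplied by a smooth positive factor on $(0,\infty)$, and $W_t \to 0$ as $t \to 0$. Hence the law of the iterated logarithm for $B$ at the (a.s.\ finite) hitting time gives this property almost surely. Therefore $\tau_N/N^2 \Rightarrow T$.

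\emph{Step 3: uniform integrability, the main obstacle.} It remains to show that $\sup_N \E[(\tau_N/N^2)^2] < \infty$, which is implied by a bound of the form $\P(\tau_N > kN^2) \le C(1-\delta)^{k}$ with $C,\delta$ independent of $N$. This is the kind of exponential tail estimate announced as the paper's main result, and I would use it directly if available. Otherwise I would prove it as follows.

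The pair $(n, Z_n)$ is a Markov chain, and conditionally on the past,
\[
\P(X_{n+1} = 1 \mid \mathcal F_n) = \tfrac12 + \tfrac{a Z_n}{2n}.
\]
On $\{\tau_N > n\}$ with $n \ge N^2$, the bias is therefore at most $\varepsilon_n \le |a| N/n \le 1/N$. I would split time into blocks $[jN^2,(j+1)N^2]$ for $j \ge 1$. Within such a block, the walk is a nearest-neighbour walk with adapted, possibly adversarial, bias of size at most $1/N$.

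Let $P_0$ denote the law of a simple symmetric random walk over $N^2$ steps. The likelihood ratio of the ERW block law with respect to $P_0$ is
\[
L = \prod_i \bigl(1 + 2\varepsilon_i X_i\bigr).
\]
Its logarithm has mean at least $-\sum_i 2\varepsilon_i^2 \ge -2$ and variance $O(1)$, uniformly in $N$ and in the starting state. Under $P_0$, the walk started anywhere in $\{-N,\dots,N\}$ exits within $N^2$ steps with probability at least $\delta_0 > 0$, uniformly in $N$. A Cauchy--Schwarz bound,
\[
P_0(A)^2 \le \P(A)\,\E_0\bigl[L^{-1}\bigr],
\]
with $\E_0[L^{-1}]$ bounded, then gives the per-block exit probability $\P(A) \ge \delta := \delta_0^2/C'$. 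By the Markov property at block boundaries, $\P(\tau_N > (k+1)N^2) \le (1-\delta)^k$. The delicate point is obtaining the uniform control of $L$, in particular of $\E_0[L^{-1}]$, when the bias depends adaptively on the path.

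\emph{Conclusion.} Steps 2 and 3 together give $\E[\tau_N]/N^2 \to \E[T] = \theta$. The constant satisfies $\theta \in (0,\infty)$: it is finite by the tail bound, and positive since $W$ has continuous paths started at $0$.
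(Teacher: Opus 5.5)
Your proposal is correct and follows the paper's proof. You identify $\theta$ as the expected hitting time of $\{-1,1\}$ by the Baur--Bertoin limit process $W$, obtain $\tau_N/N^2 \Rightarrow$ this hitting time by continuous mapping, and upgrade to convergence of means through uniform integrability coming from the exponential upper bound of Theorem~\ref{main_thm}. Two points the paper passes over quickly, your check that $W$ almost surely lies in the continuity set of the hitting-time functional and your remark that $\theta>0$, are sound. The likelihood-ratio fallback for the tail bound is not needed, since the paper proves that bound.
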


\vspace{0.2 cm}

The above result is obtained as a corollary of a much stronger one, which might actually be considered as the main contribution of this work. Namely we obtain tight bounds on the tail of the game's duration of the elephant's ruin.  

\begin{theorem}\label{main_thm} If $0<p<3/4$ then there exists positive constants $C_1$, $C_2$, $C_3$ and $C_4$ such that, for all $N \ge 2$ and $t \geq 1$, the following bound holds:

    \begin{equation*}
        C_1 \exp \left( -C_2 \frac{t}{N^2} \right) \leq \P \left( \tau_N > t \right) \le C_3 \exp \left(-C_4\frac{t}{N^2}\right).
    \end{equation*}
\end{theorem}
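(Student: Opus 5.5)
The plan exploits the fact that $(n,Z_n)$ is a time-inhomogeneous Markov chain. Writing $\mathcal F_n=\sigma(X_1,\dots,X_n)$, the definition gives
\begin{equation*}
\P\left(X_{n+1}=1 \mid \mathcal F_n\right)=\frac12+\frac{(2p-1)Z_n}{2n},
\qquad
\E\left[X_{n+1}\mid\mathcal F_n\right]=\frac{(2p-1)Z_n}{n}.
\end{equation*}
So, as long as the walk stays in $\{-N,\dots,N\}$, its drift is at most $|2p-1|N/n$ in absolute value. This becomes negligible once $n$ is a large multiple of $N^2$. Both bounds will be proved by splitting time into blocks of length of order $N^2$ and using the Markov property of $(n,Z_n)$ at block endpoints. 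Throughout, $C_1,\dots,C_4$ may depend on $p$ but not on $N$ or $t$.

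\textbf{Upper bound.} The key identity is
\begin{equation*}
\E\left[Z_{k+1}^2-Z_k^2\mid\mathcal F_k\right]=1+\frac{2(2p-1)Z_k^2}{k}.
\end{equation*}
For $k\ge 4N^2$ and $|Z_k|< N$, the right-hand side is at least $1-2|2p-1|N^2/k\ge \frac12$.
\begin{enumerate}[(i)]
\item Fix $n\ge 4N^2$ and any $\mathcal F_n$-measurable state with $|Z_n|<N$.
\item Apply optional stopping to $Z_k^2-\frac12 k$ between $n$ and $\tau_N\wedge(n+m)$. Since $|Z|\le N$ up to exit, this gives $\E\left[(\tau_N-n)\wedge m\mid\mathcal F_n\right]\le 2N^2$ uniformly in $m$, hence the same bound for $\E[\tau_N-n\mid \mathcal F_n]$.
\item By Markov's inequality, $\P(\tau_N> n+4N^2\mid\mathcal F_n)\le\frac12$ on $\{\tau_N>n\}$.
\item Chaining the blocks $[4N^2+4jN^2,\,4N^2+4(j+1)N^2]$ gives $\P(\tau_N>t)\le 2^{-\lfloor t/(4N^2)\rfloor+1}$.
\end{enumerate}
This is the right-hand inequality, with the first block $[0,4N^2]$ absorbed into $C_3$. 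Note that this direction does not seem to need $p<3/4$.

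\textbf{Lower bound.} The approach is to construct, with uniformly positive probability per block, a path that stays confined.

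\emph{Initial phase.} Here the restriction $p<3/4$ enters. Let $a_n=\prod_{k=1}^{n-1}\bigl(1+\frac{2p-1}{k}\bigr)\asymp n^{2p-1}$, so that $M_n=Z_n/a_n$ is a martingale with increments bounded by $C/a_n$. Then
\begin{equation*}
\E\left[M_n^2\right]\le C\sum_{k\le n}a_k^{-2}\asymp n^{3-4p}\quad(\text{since }p<3/4),
\end{equation*}
and Doob's $L^2$ inequality gives $\E\left[\sup_{k\le n}Z_k^2\right]\le C'n$. Choosing $n_0=cN^2$ with $c$ small yields $\P\left(\sup_{k\le n_0}|Z_k|<N/2\right)\ge\frac12$. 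This handles the times where the drift is not small.

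\emph{Block step.} Next I would prove a uniform block estimate. There exist $\eta,\delta>0$ such that for every $n\ge cN^2$ and every $|z|\le N/2$,
\begin{equation*}
\P\left(\sup_{n\le k\le n+\eta N^2}|Z_k|<N,\ |Z_{n+\eta N^2}|\le N/2 \;\middle|\; Z_n=z\right)\ge\delta .
\end{equation*}
Chaining these blocks via the Markov property gives $\P(\tau_N>t)\ge\frac12\delta^{\lceil t/(\eta N^2)\rceil}$, which is the left-hand inequality. For the block estimate, I would again write $Z_k=(a_k/a_n)z+a_k(M_k-M_n)$. On a window of length $\eta N^2$ starting at $n\ge cN^2$, the ratio $a_k/a_n$ is $1+O(\eta/c)$, and the drift contributes at most $O(\eta/c)N$. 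It then suffices to show that the martingale part, whose increments are $\pm1$ up to $O(N/n)$ corrections, has probability at least $\delta$ of staying in a tube of width $N/4$ while ending on the correct side, in order to push $|Z|$ back into $[-N/2,N/2]$.

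\textbf{Main obstacle.} I expect this last uniform tube estimate to be the hardest step, since it needs a genuine lower bound and not just concentration. My first approach would be a direct coupling on each block with a simple random walk whose step probabilities differ from $\frac12$ by at most $|2p-1|/(2c N)$. A Girsanov-type likelihood-ratio comparison over $\eta N^2$ steps then changes probabilities only by a bounded factor (the log-likelihood ratio has variance $O(\eta N^2/(c^2N^2))=O(\eta/c^2)$). The tube probability for the simple random walk is bounded below by Donsker's theorem, or by an explicit reflection-principle estimate.
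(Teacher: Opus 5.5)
Your proof is correct in outline but takes a genuinely different route from the paper, which works entirely through the uniform-variable couplings with the simple random walk and Lemma~\ref{lemma:sym}.

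\textbf{Upper bound.} The paper splits into two cases. For $p\ge1/2$ it uses direct domination. For $p<1/2$ it restarts a simple random walk at time $dN^2$ from the elephant's position and dominates the accumulated discrepancy by a $\mathrm{Binomial}(12N^2,(1-2p)/(dN))$ variable. Your submartingale $Z_k^2-k/2$ on $[4N^2,\tau_N]$ replaces this with one self-contained argument valid for every $p\in(0,1)$, with explicit constants $C_3=4$, $C_4=\tfrac14\ln 2$. Conditioning on $\mathcal F_n$ keeps the block chaining rigorous despite the time-inhomogeneity.

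\textbf{Lower bound.} The paper uses domination for $p\le1/2$. For $1/2<p<3/4$ it controls the discrepancy $D_{cN^2}$ in mean through the Sch\"utz--Trimper bound $\E[Z_j^2]\le j/(3-4p)$, then forces returns to the origin with the explicit gambler's-ruin formula for an asymmetric walk of bias $O(1/N)$. In your version, the normalized martingale $Z_n/a_n$ with Doob's inequality supplies the second-moment input. This is exactly where $p<3/4$ enters, since only then $a_n^2\sum_{k\le n}a_k^{-2}\asymp n$. The change of measure on blocks started after time $cN^2$ replaces the asymmetric gambler's ruin. This costs Donsker's theorem and a likelihood-ratio estimate instead of elementary formulas. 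In return, your block statement, uniform in $n\ge cN^2$ and ending back in $[-N/2,N/2]$, is precisely what the chaining needs. The decomposition $Z_k=(a_k/a_n)z+a_k(M_k-M_n)$ in the block step is unnecessary: apply the change of measure directly to the event $\{\sup_{n\le k\le n+\eta N^2}|Z_k|<N,\ |Z_{n+\eta N^2}|\le N/2\}$.

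Three points need repair.

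\textbf{The Doob step for $p<1/2$.} Doob's inequality for $M$ yields $\E[\sup_{k\le n}Z_k^2]\le C'n$ only when $a_k$ is nondecreasing, i.e.\ $p\ge1/2$. For $p<1/2$ you have $a_k\ge a_n$, and $\sup_k a_k^2M_k^2\le a_1^2\sup_kM_k^2$ only gives $O(n^{3-4p})$, which is too large. There are two fixes:
\begin{enumerate}
\item Apply Doob on dyadic ranges, $\E[\max_{2^j\le k<2^{j+1}}Z_k^2]\le 4a_{2^j}^2\,\E[M_{2^{j+1}}^2]=O(2^j)$, and sum over $j$.
\item For $p\le1/2$, use the monotone coupling $|Z_k|\le|S_k|$, which gives that whole case of the lower bound from the simple random walk.
\end{enumerate}

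\textbf{The likelihood-ratio step.} A bounded-variance log-likelihood ratio does not by itself change probabilities by a bounded factor, so this needs a precise statement. For example, by Cauchy--Schwarz,
$\P_{\mathrm{SRW}}(A)^2\le\P_{\mathrm{ERW}}(A)\,\E_{\mathrm{SRW}}\bigl[d\P_{\mathrm{SRW}}/d\P_{\mathrm{ERW}}\bigr]$.
Replace the walk by a simple random walk once it leaves $(-N,N)$. The tower property, with per-step factor $\bigl(4q_k(1-q_k)\bigr)^{-1}$, then bounds the last expectation by $\bigl(1-(2p-1)^2/(cN)^2\bigr)^{-\eta N^2}\le\exp(C\eta/c^2)$ for $N$ large.

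\textbf{Small $N$.} Both the comparison and the tube estimate need $N\ge N_0$, since you need $|2p-1|/(cN)$ small and a nondegenerate tube of width $N/4$. For $2\le N<N_0$, use a crude bound such as keeping the walk in $\{-1,0,1\}$. This has probability at least $\min(1-p,\tfrac12)^{\lceil t/2\rceil}$, which plays the role of the paper's $(1-p)^t$ bound.
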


\begin{remark}
    Even though formally the result above is stated only for $0 < p < 3/4$, the upper bound actually holds for all $0 < p < 1$. Moreover, some of the constants above may depend on $p$.
\end{remark}

The main idea behind the proof of Theorem \ref{main_thm} is to first establish analogous bounds for the SRW, and then demonstrate via couplings that, on a quadratic time scale, the ERW does not deviate excessively from the SRW. These couplings rely on a well-known time-inhomogeneous Markovian description of the ERW. Given a sequence of i.i.d. random variables with uniform distribution on $(0,1)$, we can construct both the ERW and the SRW on the same probability space. Each walk determines its next jump based on the value of the current uniform random variable relative to a specified threshold. For the SRW this threshold is always $1/2$, while for the ERW it depends on both time and the walk’s current state—though it is always either greater or less than $1/2$, depending on whether $p>1/2$ or not. This construction yields joint realizations in which the ERW either dominates or is dominated by the SRW, according to the value of $p$. By leveraging the SRW's bounds, this approach already resolves half of the cases. To address the remaining cases and thus complete the proof of Theorem \ref{main_thm}, one then needs to control the distance between the SRW and the ERW on the appropriate time scale. Specifically, for $t = cN^2$, we identify some scaling constant $A$ such that, whenever the ERW or the SRW reaches the boundaries of $[-AN, AN]$, the other one has vanishingly small probability of not having reached $\{-N,N\}$ yet. This allows us prove Theorem \ref{main_thm} on the quadratic time scale $t = cN^2$. Then, by partitioning the interval $[0, t]$ into sub-intervals of length $t/cN^2$, we extend the result to any $t \geq 1$. Finally, Proposition \ref{prop_1} follows directly from the functional limit theorem in \cite{baur2016elephant} and uniform integrability.

\vspace{0.2 cm}

The paper is organized as follows. In Section 2 we derive a well-known (time inhomogeneous) Markovian description of the ERW allowing a coupling with the simple random walks. In Section 3 we prove the exponential bounds on the tail of the simple random walk. In Section 4 we define a coupling between ERW and the classical random walks and prove our main result, Theorem \ref{main_thm}. Finally, in Section 5, we prove Proposition \ref{prop_1}.

\vspace{0.2 cm}

\section{Markovian description of the ERW}

Even though the ERW appears as highly non-Markovian, it is actually a well-established fact that its dynamic can be described in a Markovian fashion, with the caveat that the transition kernel is time-inhomogeneous. Writing $L_n$ (resp. $R_n$) for the total number of $-1$ steps (resp. $+1$ steps) at time $n$, the following holds $$\begin{cases}
L_n + R_n = n\\
R_n - L_n = Z_n
\end{cases} \Leftrightarrow \; \; \; \;
\begin{cases}
R_n = \frac{n + Z_n}{2}\\
L_n = \frac{n-Z_n}{2}
\end{cases} 
$$
At that time, the ERW goes right with probability $p \cdot \frac{R_n}{n} + (1-p) \cdot \frac{L_n}{n} = \frac{1}{2} + \frac{(2p-1) Z_n}{2n}$,
and left with complementary probability. In other words  $(Z_n)_{n \geq 0}$ is a time-inhomogeneous Markov chain with transition probabilities at time $n \geq 1$ given by:
$$ \P \left( Z_{n+1} = j \, \middle| \, Z_n = i \right) =
\begin{cases}
 \frac{1}{2} + \frac{(2p-1) i}{2n} &\text{ if } j = i+1,\\
 \frac{1}{2} - \frac{(2p-1) i}{2n} &\text{ if } j = i-1,\\
 0 &\text{otherwise.}
 \end{cases}
$$

\vspace{0.2 cm}

As can be seen above, the ERW transition kernel presents a (time and space dependent) drift, oriented toward the origin when $p<1/2$, and oriented contrariwise when $p>1/2$. When $p = 1/2$ the ERW is simply the classical simple random walk on $\mathbbm{Z}$. Our purpose being the study of the ERW on $\Z \cap \left[-N, \ldots, N\right]$, we might as well study the absolute value of $(Z_n)_{n \geq 0}$, and consider the time it hits $N$. Taking advantage of the symmetry one can easily check that the transition kernel remains almost unchanged:
\begin{align*}
\P \left( |Z_{n+1}| = j \, \middle| \, |Z_n| = i\right) = \begin{cases}
 1 &\text{ if } i=0 \text{ and }  j = 1,\\
 \frac{1}{2} + \frac{(2p-1) i}{2n} &\text{ if } i > 0 \text{ and } j = i+1,\\
 \frac{1}{2} - \frac{(2p-1) i}{2n} &\text{ if } i > 0 \text{ and } j = i-1,\\
 0 &\text{otherwise.}
 \end{cases}
\end{align*}

\vspace{0.2 cm}

\section{The Elephant's tail}

In this section we prove Theorem \ref{main_thm}. This is done by mean of a coupling between the ERW and the SRW, leveraging analogous results for the SRW, stated in the following lemma and proven in the appendix.

\begin{lemma}\label{lemma:sym} There exists positive absolute constants $C_1$, $C_2$, $C_3$ and $C_4$ such that 
\begin{equation*}
    C_1 \exp \left(-C_2\frac{t}{N^2}\right) \le \mathbb{P}(\sigma_N  > t) \le C_3 \exp \left(-C_4\frac{t}{N^2}\right),
\end{equation*}
for all $N\ge 2$ and $t \ge  1$.  
\end{lemma}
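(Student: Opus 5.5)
We prove both bounds by combining a one-block estimate on the time scale $N^2$ with the Markov property of the simple random walk. Here $\sigma_N$ is the exit time of $(S_n)$ from $\{-N+1,\dots,N-1\}$, started at $0$; for a general starting point $x$ we write $\P_x$.

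For the upper bound we show that there is $\delta>0$, independent of $N\ge 2$, such that
\begin{equation*}
\inf_{|x|<N}\P_x\left(\sigma_N \le N^2\right)\ge \delta .
\end{equation*}
It is enough to treat the starting point $0$: the walk must reach $\pm N$ before it can exit from a general $x$, and by symmetry and the strong Markov property (reflection) the start at $0$ is the worst case. Alternatively, start from $x$ and use that $|S_n-x|\ge 2N$ forces exit. By the reflection principle (or Chebyshev/Kolmogorov), $\P_x(\sigma_N>N^2)\le \P(|S_{N^2}-x|... )$ is cumbersome to write directly, so we argue through the variance instead. Doob's maximal inequality does not give a lower bound, but the optional stopping identity $\E_x[\sigma_N\wedge n]=\E_x[S_{\sigma_N\wedge n}^2]-x^2\le N^2$ does. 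It yields $\E_x[\sigma_N]\le N^2$ uniformly in $x$, and Markov's inequality then gives
\begin{equation*}
\P_x\left(\sigma_N>2N^2\right)\le \tfrac12 .
\end{equation*}
Iterating over the blocks $[2kN^2,2(k+1)N^2]$ with the Markov property at the block endpoints gives
\begin{equation*}
\P(\sigma_N>t)\le (1/2)^{\lfloor t/(2N^2)\rfloor}\le 2\exp\left(-\tfrac{\ln 2}{2}\,\tfrac{t}{N^2}\right).
\end{equation*}
This is the upper bound with $C_3=2$ and $C_4=\ln 2/2$.

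For the lower bound we need a uniform positive probability of surviving a block of length $N^2$ while ending near the middle, so that the blocks can be chained. We claim there is $\eta>0$ such that, for all $N\ge 2$ and $|x|\le N/2$,
\begin{equation*}
\P_x\left(\sigma_N>N^2,\ |S_{N^2}|\le N/2\right)\ge \eta .
\end{equation*}
For $N$ large this follows from Donsker's invariance principle. Brownian motion started in $[-1/2,1/2]$ stays in $(-1,1)$ during $[0,1]$ and ends in $[-1/2,1/2]$ with probability bounded below uniformly in the starting point, and this event is open up to boundary sets of measure zero, so the convergence is uniform over the compact set of starting points. Parity issues for the endpoint are harmless: relax the endpoint condition to $N/2+1$ and shrink accordingly. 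For the finitely many small $N$, we exhibit explicit paths that oscillate between $0$ and $\pm1$; since $N\ge2$, such a path never exits. For bounded $N$ each has probability at least $2^{-N^2}$, which is bounded below.

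Chaining $\lceil t/N^2\rceil$ blocks with the Markov property gives
\begin{equation*}
\P(\sigma_N>t)\ge \eta^{\lceil t/N^2\rceil}\ge \eta\,\exp\left(-\ln(1/\eta)\,\tfrac{t}{N^2}\right).
\end{equation*}
The main technical point is the uniformity of the one-block lower bound in $N$ and in the starting point. If the invariance-principle argument is awkward, we would instead use the explicit spectral formula for the SRW killed outside $(-N,N)$. With eigenvalue $\cos(\pi/2N)$,
\begin{equation*}
\P(\sigma_N>t)\asymp \cos(\pi/2N)^{t},
\end{equation*}
up to parity and constant factors. Since $\cos(\pi/2N)^t=\exp\left(-\Theta(t/N^2)\right)$, this gives both bounds directly.
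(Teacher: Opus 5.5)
Your upper bound is the paper's argument: optional stopping gives $\E_x[\sigma_N]\le N^2$ for every $|x|<N$, and Markov's inequality plus iteration over blocks finishes it, here with $c=2$. The preliminary remarks about the start at $0$ being the worst case and about reflection are unnecessary and can be deleted. Your lower bound takes a genuinely different route from the paper's. The paper never needs uniformity in the starting point. It applies Kolmogorov's maximal inequality only from the origin, $\P_0(\sigma_N>cN^2)\ge 1-c$, and then re-centres the walk using the gambler's-ruin fact that from $\pm N$ it returns to $0$ before reaching $\pm 2N$ with probability $1/2$. Chaining these excursions by the strong Markov property gives $\P(\sigma_{2N}>kcN^2)\ge ((1-c)/2)^k$, which is a bound for the doubled interval; the passage to arbitrary $N\ge 2$, $t\ge 1$ is left as ``not hard''. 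You instead chain deterministic blocks with a one-block estimate that is uniform over $|x|\le N/2$ and also controls the endpoint. This yields the statement for $\sigma_N$ itself, for all $N$ and $t$, with no conversion step. The cost is that the uniformity has to be proved, and you only assert it. Via Donsker one needs a compactness argument: take $x_k/N_k\to y$, apply the portmanteau theorem to the open set $\{f:\sup_{[0,1]}|f|<1,\ |f(1)|<1/2\}$, and use that the Brownian probability of this set is positive and lower semicontinuous in $y\in[-1/2,1/2]$. Also, for small $N$ the explicit path must first move from $x$ into $\{-1,0,1\}$ before oscillating. In fact your one-block claim has an elementary proof, which makes both Donsker and the spectral fallback unnecessary. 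For $m=\lfloor N^2/8\rfloor$, Kolmogorov gives $\P(\max_{n\le m}|S_n|<N/2)\ge 1/2$, and by the symmetry $S\mapsto -S$ at least half of this mass has $S_m\,\mathrm{sign}(x)\le 0$. For $|x|\le N/2$ this keeps $x+S_n$ in $(-N,N)$ up to time $m$ and gives $|x+S_m|\le N/2$. So $\eta=1/4$ works with blocks of length $m\ge N^2/16$ for $N\ge 3$, and $N=2$ is done by hand. (The spectral sketch is also less direct than claimed: the crude $\ell^2$ bound on the killed semigroup carries a factor $\sqrt{N}$, so an $N$-independent constant requires the explicit eigenfunction expansion.)
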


Now let $(U_k)_{k\ge 1}$ be a sequence of i.i.d. random variables with uniform distribution on $(0,1)$. Let $\tilde{Z}_0 = 0$ and, for some $p \in (0,1)$, define recursively $\eta_k$ and $\tilde{Z}_k$ as follows:
\begin{equation*}
\eta_{k} = \mathbbm{1}_{\left\{\tilde{Z}_{k-1} = 0 \right\}} +  \mathbbm{1}_{\left\{\tilde{Z}_{k-1} \neq 0\right\}} \left[ \mathbbm{1}_{\left\{U_{k} < \frac{1}{2} + \frac{(2p - 1)}{k- 1}\Tilde{Z}_{k-1} \right\}} - \mathbbm{1}_{\left\{U_{k} \ge  \frac{1}{2} + \frac{(2p - 1)}{k - 1}\Tilde{Z}_{k-1} \right\}}\right],
\end{equation*}
and
\begin{equation*}
\tilde{Z}_{k} = \sum_{j=1}^{k}\eta_j.
\end{equation*}
Observe that from the precedent section it follows that $(\tilde{Z}_{k})_{k\ge 0}$ and $(|Z_k|)_{k\ge 0}$ have the same distribution.

\subsection{Proof of the upper bound in Theorem \ref{main_thm}}

    Below we break the proof into two separate cases. First we consider the easy one in which the elephant is drifted to the right ($p \geq 1/2)$. In that case the ERW and the SRW can be coupled in such a way that the SRW is dominated by the ERW, so that the result follows straightforwardly from Lemma \ref{lemma:sym}. Then we consider the harder case in which the elephant is drifted to left ($p<1/2$). The difficulty arises from the fact that, in this case, the domination is reversed. The main observation is that, in the diffusive regime, one expects the drift of the ERW to vanish as times goes by, so that if one can control how fast it does so, the SRW bounds from Lemma \ref{lemma:sym} might still be of some use. In order to do so we let the ERW wander around for some time, say $dN^2$. If it has not reached the barrier $\{-N,N\}$ yet at that point, then the drift shall be sufficiently small for the ERW to start behaving essentially like the SRW. Thus, with high probability, the additional time it would take for a SRW to go as far as $\{-2N,2N\}$, while starting at the same position as the ERW at time $dN^2$, shall be sufficient for the ERW to reach $\{-N,N\}$ in the meantime. Then, for some constant $c$, Lemma \ref{lemma:sym} gives us a uniform upper bound for $\mathbbm{P}_x(\tau_{N} > cN^2)$, which can easily be turned into an exponential upper-bound via the strong Markov property. 

    \medskip

    \begin{enumerate}[(i)]
    \item \textbf{Case $\bm{p \ge 1/2}$.}
    Let $\tilde{S}_0 = 0$ and, for $k\ge 1$, define recursively $\xi_k$ and $\tilde{S}_k$ by:
    \begin{equation*}
        \xi_{k} = \mathbbm{1}_{\left\{\tilde{S}_{k-1} = 0 \right\}} +  \mathbbm{1}_{\left\{\tilde{S}_{k-1} \neq 0 \right\}} \left( \mathbbm{1}_{\left\{U_{k} < \frac{1}{2} \right\}} - \mathbbm{1}_{\left\{U_{k} \ge  \frac{1}{2}\right\}}\right), 
    \end{equation*}
    and
    \begin{equation}\label{srw-coupling}
        \tilde{S}_{k} = \sum_{j=1}^{k}\xi_j.
    \end{equation}
    Then $\tilde{S}_k$ has the same distribution as $|S_k|$, where $(S_k)_{k \ge 0}$ is still the symmetric simple random walk defined in the previous section. Moreover, $(\tilde{S}_k)_{k\ge 0}$ and $(\tilde{Z}_{k} )_{k\ge 0}$ are constructed in such a way that the following holds:
    \begin{equation}\label{enq_1}
        \tilde{Z}_k \ge \tilde{S}_k, \, \text{ for all } k\ge 0.
    \end{equation}
    Notice that the only way inequality \eqref{enq_1} could be broken would be if at some point $\tilde{Z}_k = 1$ and $\tilde{S}_k = 0$ (and $\tilde{Z}_{k}$ jumps to the left between $k$ and $k+1$). Fortunately this never happens since $\tilde{Z}_k$ and $\tilde{S}_k$ have the same parity. Below, we deliberately abuse notation, redefining $\tau_N$ and $\sigma_N$ in terms of $(\tilde{S}_k)_{k\ge 0}$ and $(\tilde{Z}_{k} )_{k\ge 0}$. By inequality \eqref{enq_1} and Lemma \ref{lemma:sym}, for all $N\ge 1$ and $t\ge 1$, one indeed has:
    
    \begin{equation*}
        \P(\tau_N > t) \le \P(\sigma_N > t) \le C_3e^{-C_4t}.
    \end{equation*}
   
    \medskip
   
    \item \textbf{Case $\bm{p < 1/2}$.} Let $d>0$ be some positive constant, which exact value will be chosen later, and let the definition of $(\tilde{S}_k)_{k\ge 0}$ be slightly modified. $\tilde{S}_0 = \tilde{Z}_{dN^{2}}$ and, for $k \ge 1$, define recursively $\zeta_k$ and $\tilde{S}_k$ as follows:
    
    \begin{equation*}
        \zeta_{k} = \mathbbm{1}_{\left\{\tilde{S}_{k-1} = 0 \right\}} + \mathbbm{1}_{\left\{\tilde{S}_{k-1}\neq 0\right\}}\left[\mathbbm{1}_{\left\{U_{k + dN^{2}} < \frac{1}{2}\right\}} - \mathbbm{1}_{\left\{U_{k + dN^{2}} \ge  \frac{1}{2} \right\} }\right],
    \end{equation*}
    and
    \begin{equation*}
        \tilde{S}_k = \tilde{S}_0 + \sum_{j=1}^k \zeta_j.
    \end{equation*}

    In words, $(\tilde{S}_k)_{k\ge 0}$ is still distributed as $(|S_k|)_{k \ge 0}$, but starting at a random position depending on the trajectory of $(\tilde{Z}_k)_{k\ge 0}$ up to time $dN^2$. Furthermore, let $\sigma_N$ be momentarily redefined once again in term of this new $(\tilde{S}_k)_{k\ge 0}$. The coupling is now shifted to the right with respect to time, and---since $p <1/2$---it goes the other way around:
    \begin{equation}
        \tilde{Z}_{d N^2 + k} \le \tilde{S}_k, \, \text{ for all } k\ge 0.
    \end{equation}
    We can nonetheless control the distance between the two. Indeed, between times $j-1$ and $j$, the distance increases by two units if and only if: $$U_j \in \left[\frac{1}{2} + \frac{(2p-1)\tilde{Z}_{j +dN^{2}}}{j + dN^2}, \, \frac{1}{2}\right] \text{ and } \tilde{Z}_j \neq 0.$$ Otherwise the distance remains unchanged, or might even decrease (when $\tilde{Z}_j = 0$). Hence:
    
    \begin{equation}\label{enq_2}
        \tilde{S}_k -  2\sum_{j=1}^k\mathbbm{1}_{\left\{\frac{1}{2} + \frac{(2p-1)\tilde{Z}_{j +dN^{2}}}{j + dN^2}\le  U_{j + dN^{2}} < \frac{1}{2}\right\}} \le \tilde{Z}_{k + dN^{2}} , \, \text{ for all } k\ge 0.
    \end{equation}
    Let $c > 12 + d$. For any $x  < N$ we have
    \begin{align*}\label{eq:boundonethird+}
        \P_x \left(\tau_N > cN^2\right) &= \P_x\left(\tau_N > cN^{2}, \sigma_{2N} > 3(2N)^2\right) + \P_x \left(\tau_N > cN^{2}, \sigma_{2N} \le 3(2N)^2)\right)\\
        &\le \P_x \left(\sigma_{2N} > 3(2N)^2\right) + \P_x \left(\tau_N > cN^{2}, \sigma_{2N} \le 3(2N)^2\right)
    \end{align*}
    But $\P_x \left(\sigma_{2N} > 3(2N)^2 \right) \leq 1/3$ by Markov inequality (see inequality $(\ref{eq:markov_sym})$ in the proof of Lemma \ref{lemma:sym}), hence:
    \begin{equation}\label{eq:boundonethird+}
     \P_x \left(\tau_N > cN^2 \right) \le \frac{1}{3} +  \P_x \left(\tau_N > cN^{2}, \, \sigma_{2N} \le 3(2N)^2 \right).
    \end{equation}
    Now observe that $\tau_N > cN^{2}$ means that the elephant has not reached $N$ yet at time $c N^2$ while $\sigma_{2N} \le 3(2N)^2$ means that the symmetric random walk reached $2N$ in the meantime, even though it was exactly at the same point  $12N^2$ time-steps earlier (or more). Now assume that both hold. Then $\tilde{Z}_k < N$ for any $k \le dN^2 + \sigma_{2N}$, where $dN^2 + \sigma_{2N}$ represents the time at which $(\tilde{S}_k)_{k \geq 0}$ reaches $2N$, but on the $(\tilde{Z}_k)_{k \geq 0}$ time-scale. Then the inequality $\tilde{Z}_{dN^2 + \sigma_{2N}}< N$ together with \eqref{enq_2} implies that 
    $$2N - 2 \sum_{j=1}^{\sigma_{2N}}\mathbbm{1}_{\big\{\frac{1}{2} + \frac{(2p-1)\tilde{Z}_{j + dN^{2}}}{j + dN^{2}}\le  U_{j + dN^{2}} < \frac{1}{2}\big\}} < N.$$
    Moreover, since $\sigma_{2N} \le 3(2N)^2$ and $\tilde{Z}_{j + dN^2} < N$ for any of the indices $j$ in the sum above, the above inequality implies that
    $$ 2N - 2\sum_{j=1}^{3 (2 N)^2}\mathbbm{1}_{\{\frac{1}{2} + \frac{(2p - 1)}{dN}\le U_{j + dN^{2}} < \frac{1}{2} \} } < N.$$

    In short, we just have just proven:
    
    \begin{equation*}
        \big\{ \tau_N > cN^{2}, \sigma_{2N} \le 3(2N)^2 \big\} \subset \Big\{B_N > \frac{N}{2} \Big\},
    \end{equation*}
    where: $$B_N = \sum_{j=1}^{12 N^2}\mathbbm{1}_{\left\{\frac{1}{2} + \frac{(2p - 1)}{dN}\le U_{j + dN^{2}} < \frac{1}{2} \right\}} \sim \text{Binomial}\left(12 N^2, \, \frac{1-2p}{dN}\right).$$ 
    Now, by letting $d > \lceil 24(1-2p)\rceil$, we have:
    \begin{equation}\label{eq_2}
        \mathbb{E}[B_N] = \frac{12(1 - 2p)N}{d} < \frac{N}{2},
    \end{equation}
    and thus
    \begin{equation*}
    	\P\Big(B_N > \frac{N}{2}\Big) \le \frac{1}{2}.
    \end{equation*}
    Hence, for our choice of $d$, it holds that:
    \begin{equation} \label{eq:onehalfpart}
        \P_x \left(\tau_N > cN^{2}, \sigma_{2N} \le 3(2N)^2 \right) \le \frac{1}{2}.
    \end{equation}
    Finally, inequality \eqref{eq:onehalfpart} together with inequality \eqref{eq:boundonethird+} leads to:
    \begin{equation*}
        \P_x \left(\tau_N > cN^2 \right) \le \frac{5}{6}.
    \end{equation*}
    Then the same line of reasoning as in the end of the proof of Lemma \ref{lemma:sym} allows us to conclude:

    \begin{equation*}
        \P_x(\tau_N > t ) \le \frac{6}{5} e^{- \frac{\ln(5/6)}{c} \frac{t}{N^2} }.
    \end{equation*}
    \end{enumerate}

\subsection{Proof of the lower bound in Theorem \ref{main_thm}}

    As for the upper bound, we break the proof into two separate cases. First we get rid of the easy case $p \le 1/2$, which follows as usual from a straightforward comparison between the ERW and the SRW and Lemma \ref{lemma:sym}. Then we address the harder case $ 1/2 < p < 3/4$. The proof strategy is somewhat analogous to that of the upper bound, hinging on controlling the distance between the ERW and the SRW on a quadratic time scale. Specifically, we identify a scaling constant $A$ such that, with positive probability, whenever the ERW reaches $\{-AN, AN\}$, the SRW must have already reached $\{-N, N\}$. For some constant $c$, Lemma~\ref{lemma:sym} then provides a uniform lower bound for $\mathbb{P}(\tau_{AN} > cN^2)$. Moreover, we establish a uniform lower bound for the probability that, after the ERW reaches $\{-AN, AN\}$, it returns to the origin before hitting $\{-2AN, 2AN\}$. This is achieved by constructing a coupling similar to those previously used, but with the SRW replaced by an asymmetric random walk. The conclusion follows readily from these results.

 \medskip
 
 In this subsection let $(\tilde{S}_k)_{k\ge 0}$ be defined as in equation (\ref{srw-coupling}), with $\tilde{S}_0 = 0$ as usual.
 
    \begin{enumerate}
        \item [(i)] \textbf{Case $0 < p \le 1/2$.} In this case the coupling implies: $$\P \left(\sigma_N > t \right) \le \P \left(\tau_N > t \right),$$ and therefore the conclusion simply follows from Lemma \ref{lemma:sym}.
        
        \item [(ii)] \textbf{Case $1/2 < p < 3/4$.} Let us write 
$$
    D_k = 2\sum_{j=1}^k\mathbbm{1}_{\left\{U_{j+1}\in \left(\frac{1}{2}, \frac{1}{2} + \frac{(2p-1)\tilde{Z}_j}{j}\right)\right\}}.
$$
Then 
\begin{align*}
    \mathbb{E}[D_k] &= 2\sum_{j=1}^k\P\left(U_{j+1}\in \left(\frac{1}{2}, \frac{1}{2} + \frac{(2p-1)\tilde{Z}_j}{j} \right) \right) \\
    &= 2(2p - 1)\sum_{j=1}^k\frac{\mathbb{E}[\tilde{Z}_j]}{j}\\
     &= 2(2p - 1)\sum_{j=1}^k\frac{\mathbb{E}[|X_j|]}{j}.
\end{align*}
Since $p < 3/4$, by Jensen's inequality and equation (15) in \cite{schutz2004elephants}, we get: 
$$
    \mathbb{E}[|X_j|]\le \sqrt{\mathbb{E}\big[X_j^2\big]}\le  \frac{\sqrt{j}}{\sqrt{3 - 4p}}.
$$
Then 
$$
    \mathbb{E}[D_k] \le \frac{2(2p - 1)}{\sqrt{3 - 4p}}\sum_{j=1}^k\frac{1}{\sqrt{j}} \le \frac{2(2p - 1)}{\sqrt{3 - 4p}}\left(1 + \int_2^k\frac{1}{\sqrt{x}}dx\right) \le  \frac{4(2p - 1)}{\sqrt{3 - 4p}}\sqrt{k}.
$$
Since 
$$
    \tilde{Z}_k \le \tilde{S}_k + D_k,
$$

For any integer $A \geq 2$, one has:
\begin{align*}
    \P_0(\tau_{AN} > cN^2) &\ge \P_0(\sigma_N > cN^2, D_{cN^2} < (A-1)N)\\
    &= \P_0(\sigma_N > cN^2) - \P_0(\sigma_N > cN^2, D_{cN^2}\ge (A-1)N)\\
    &\ge \P_0(\sigma_N > cN^2) - \P_0(D_{cN^2}\ge (A-1)N)\\
    &\ge \P_0(\sigma_N > cN^2) - \frac{\mathbb{E}[D_{cN^2}]}{(A-1)N}\\
    &\ge \delta - \frac{4(2p - 1)}{\sqrt{3 - 4p}(A-1)}\sqrt{c} = \delta_A.
\end{align*}

Now choose $A$ so that $\delta_A > 0$.

Notice that if $k \ge cN^2$ and $\tilde{Z}_k \le 2AN$ then 
$$
    \frac{(2p-1)\tilde{Z}_k}{k}\le \frac{(2p-1)2AN}{k} \le \frac{2(2p - 1)A}{cN}.
$$
Consider $N$ such that $\frac{2(2p-1)A}{cN} < 1/2$. Let $(Y_k)_{k\ge 0}$ be an asymmetric simple random walk on $\mathbbm{Z}$ that jump upward with probability $p_N = 1/2 + \frac{2(2p-1)A}{cN}$ and let
\begin{equation*}
    T_y = \inf\{k\ge 0: Y_k = y\}.
\end{equation*}
Then we have:
$$
    \mathbbm{P}_{AN}(T_0 < T_{2AN}) = \frac{\varphi(2AN) - \varphi(AN)}{\varphi(2AN) - \varphi(0)},
$$
where $\phi(y) = \left(\frac{1-p_N}{p_N} \right)^y$. This follows from the Gambler ruin problem for the asymmetric simple random walk (see Theorem 4.8.9 in \cite{durrett2019probability}). Observe that 
\begin{equation} \label{eq:assymgambler}
    \frac{\varphi(2AN) - \varphi(AN)}{\varphi(2AN) - \varphi(0)} = \frac{\left(\frac{1-p_N}{p_N}\right)^{2AN} - \left(\frac{1-p_N}{p_N}\right)^{AN} }{\left(\frac{1-p_N}{p_N}\right)^{2AN}  -1} = \frac{\left(\frac{1-p_N}{p_N}\right)^{AN}}{\left(\frac{1-p_N}{p_N}\right)^{AN} + 1},
\end{equation}
and 
\begin{equation*}
    \left(\frac{1-p_N}{p_N}\right) = \frac{cN - 4A(2p-1)}{cN + 4A(2p-2)} = 1 - \frac{8A(2p-1)}{cN+4A(2p-1)}.
\end{equation*}

Since $\log(1 - x) \ge -x/(1-x)$, for $x\in (0,1)$, we have that 
\begin{align*}
    \left(\frac{1-p_N}{p_N}\right)^{AN} &= e^{AN\log\left(1 - \frac{8A(2p-1)}{cN+4A(2p-1)}\right)} \ge e^{-\frac{8A^2(2p-1)N}{cN-4A(2p-1)}}.
\end{align*}
In view of $N > \frac{4(2p-1)A}{c}$, let us take $\epsilon\in (0,c)$ such that $\left\lceil \frac{4(2p-1)A}{c}\right\rceil = \frac{4(2p-1)A}{c - \epsilon}$. Then
\begin{equation*}
    cN - 4A(2p-1) \ge \epsilon N,
\end{equation*}


which implies that 
\begin{equation} \label{eq:belowkappaabove1}
    1 \ge \left(\frac{1-p_N}{p_N}\right)^{AN} \ge e^{-\frac{8A^2(2p-1)N}{cN-4A(2p-1)}}\ge e^{-\frac{8A^2(2p-1)}{\epsilon}} \eqcolon \kappa_A.
\end{equation}
Then, from \eqref{eq:assymgambler} and \eqref{eq:belowkappaabove1}, we get: 
\begin{equation*}
    \mathbbm{P}_{AN}(T_0 < T_{2AN}) \ge \frac{\kappa_A}{2} \eqcolon \gamma_A > 0.
\end{equation*}



Now, if $(\tilde{Z}_n)_{n \geq 0}$ hits $AN$ after $cN^2$ steps, coupling $(\tilde{Z}_n)_{n \geq 0}$ with $(Y_n)_{n \geq 0}$ in the usual way, we obtain from the inequality above that $\tilde{Z}_k$ returns to zero before hitting $2AN$ with a probability bigger than $\gamma_A$. Then, the same arguments as before lead to:
$$
    \P(\tau_{2AN} \ge kcN^2) \ge (\delta_A\gamma_A)^k, \, \text{ for all } k\ge 1 \, \text{ and } \, N > \frac{4(2p-1)A}{c}.
$$
Let us denote $ M= \left\lfloor N/2A \right\rfloor $ and $\delta = \delta_A \gamma_A \in (0,1)$. Then if $M > \frac{4(2p-1)A}{c}$, we have for any $t \in \N$:
$$ \P \left( \tau_N > t \right)  \geq \P \left( \tau_{2AM} > \frac{t}{cM^2} cM^2\right) \geq \P \left( \tau_{2AM} > \left\lceil \frac{t}{cM^2} \right\rceil cM^2\right) \geq \delta^{\left\lceil \frac{t}{cM^2} \right\rceil} \geq \delta^{\frac{t}{cM^2} + 1}$$

Now, by definition of $M$, one has $M \geq \frac{N - 2A}{2A}$ and therefore, assuming $N \geq 2A + 1$, one has also $M^2 \geq \left(\frac{N - 2A}{2A}\right)^2$. Hence, for $N \geq 2A + 1$, one has $$\frac{t}{cM^2} \leq \frac{4A^2 t}{c(N-2A)^2} = \left(\frac{N}{N-2A} \right)^2 \frac{4A^2 t}{c N^2},$$ and since $\frac{N}{N-2A} \leq 2A+1$, for $N \geq 2A + 1$ one has also $$\frac{t}{cM^2} \leq \frac{4(2A+1)^2A^2}{c} \frac{t}{N^2}.$$ 

Then, for $N > \max \left\{2A, \frac{8A^2(2p-1)}{c} \right\}$, we indeed obtain $$\P \left( \tau_N > t \right) \geq \delta^{\frac{4(2A+1)^2A^2}{cN^2} t + 1} = C_1 e^{-\tilde{C}_2 \frac{t}{N^2}},$$
with $C_1 = \delta$ and $\tilde{C}_2 = \frac{4(2A+1)^2A^2}{c} \log(\frac{1}{\delta})$.

Now assume that $2\le N\le \max \left\{2A, \frac{8A^2(2p-1)}{c} \right\}:= C_A$. Let $\hat{S}_0 = 0$ and for $k \geq 1$ define 
\begin{equation*}
    \hat{S}_k \coloneq \sum_{j=1}^k \mathbbm{1}_{\left\{U_j < p \right\}}.
\end{equation*}

Notice that, since ${\left\{U_j < p \right\} = \left\{U_j < \frac{1}{2} + \frac{2p-1}{2} \right\}}$, one has $\tilde{Z}_k \le \hat{S}_k$, for all $k\ge 0$. Therefore, if 
\begin{equation*}
    \hat{\sigma}_N = \inf\{k\ge 1: \hat{S}_k = N\},
\end{equation*}
then, for all $N \ge 2$ and $t\ge 1$, one has: 
\begin{equation*}
    \mathbb{P}(\tau_N > t) \ge \mathbb{P}(\hat{\sigma}_N > t).
\end{equation*}
Observing that $\{U_1 > p , \, \ldots , \, U_t > p\}\subset \{\hat{\sigma}_N > t\}$, we obtain the following---very crude, but good  enough---bound:
\begin{equation*}
    \mathbb{P}(\hat{\sigma}_N > t) \ge (1-p)^t = e^{\log(1 - p) t} \ge e^{-\log \left(\frac{1}{1-p}\right)C_A^2 \frac{t}{N^2}},
\end{equation*}
for all $t\ge 1$ and $2\le N\le C_A$. Hence taking $C_2 = \max \left\{\log \left(\frac{1}{1-p}\right)C_A^2, \tilde{C}_2 \right \}$, we can indeed conclude:
\begin{equation*}
    \mathbbm{P}(\tau_N \ge t) \ge C_1e^{-C_2\frac{t}{N^2}},
\end{equation*}
for all $N\ge 2$ and $t\ge 1$.
\end{enumerate}

\section{Expected duration of Elephant's Ruin}

Finally, we prove Proposition \ref{prop_1}. The two main ingredients of proof are the functional limit obtained in \cite{baur2016elephant}, together with the exponential bounds on the tail proven in the precedent sections.

\medskip

\begin{proof}[Proof of Proposition \ref{prop_1}]

We assume $p\in (0,3/4)$. It was established in \cite{baur2016elephant} that:
\begin{equation*}
    \left(\frac{Z_{\lfloor nt\rfloor}}{\sqrt{n}}\right)_{t\ge 0}\underset{N \rightarrow \infty}{\longrightarrow}  (W_t)_{t\ge 0},
\end{equation*}
where the convergence is a convergence in distribution in the  Skorokhod space $D[0,\infty)$ of right-continuous functions with left-hand limits, and $(W_t, t\ge 0)$ is a continuous $\mathbbm{R}$-valued Gaussian process satisfying  $W_0 = 0$, $\mathbbm{E}[W_t] = 0$ for all $t\ge 0$ and 
\begin{equation*}
    \mathbbm{E}[W_sW_t] = \frac{s}{3 - 4p}\left(\frac{t}{s} \right)^{2p-1}, \, \text{ for } 0 < s \le t.
\end{equation*} 
Let $\nu$ be the stopping time defined as: 
\begin{equation*}
    \nu = \inf\{t\ge 0: |W_t| = 1\}.
\end{equation*}
Since 
\begin{equation*}
    \left\{ \frac{\tau_N}{N^2} >  t\right\} = \left\{\tau_N > tN^2 \right\} = \left\{ \sup_{0\le s\le t}|Z_{\lfloor sN^2\rfloor}| < N\right\} = \left\{ \sup_{0\le s\le t}\frac{|Z_{\lfloor sN^2\rfloor}|}{N} < 1 \right\},
\end{equation*}
the Continuous Mapping Theorem\footnote{See, for example, Theorem 5.1 in \cite{bill}.} implies the following convergence in distribution:  
\begin{equation*}
    \frac{\tau_N}{N^2} \underset{N \rightarrow \infty}{\longrightarrow} \nu.
\end{equation*}
In order to derive convergence of the means from the the convergence in distribution,  and in order to establish $\mathbbm{E}[\nu] < +\infty$, it is enough to prove that the sequence $\left(\tau_N/N^2\right)_{N\ge 1}$ is uniformly integrable --- see, for example, Theorem 25.12 in \cite{billingsley}. That is, we need to show that:

$$ \lim_{M \rightarrow \infty} \left( \sup_{N \in \N^*} \E \left[ \frac{\tau_N}{N^2} \mathbbm{1}_{\left\{\tau_N > M N^2\right\}} \right]\right) = 0.$$
This follows easily from the upper bound obtained earlier. Indeed:
\begin{align*}
    \frac{\tau_N}{N^2} \mathbbm{1}_{\{\tau_N > MN^2\}} &= \sum_{k=M}^\infty \frac{\tau_N}{N^2} \mathbbm{1}_{\{kN^2 < \tau_N \le (k+1)N^2\}}\\
    &\le \sum_{k=M}^\infty (k+1) \mathbbm{1}_{\{kN^2 < \tau_N \le (k+1)N^2\}}\\
    &\le \sum_{k=M}^\infty (k+1) \mathbbm{1}_{\{\tau_N > k N^2\}}
\end{align*}
By Tonelli's Theorem, and Theorem \ref{main_thm}: $$ \sup_{N \in \N^*} \E \left[\frac{\tau_N}{N^2} \mathbbm{1}_{\{\tau_N > MN^2\}} \right] \leq \sum_{k=M}^\infty (k+1) \sup_{N \in \N^*} \P \left(\tau_N > k N^2 \right) \le C_1 \sum_{k=M}^\infty (k+1) e^{-C_2k},$$ and since $\sum_{k=1}^\infty (k+1) e^{-C_2 k} < \infty$ we indeed have $\sum_{k=M}^\infty (k+1) e^{-C_2k} \underset{M \rightarrow \infty}{\longrightarrow} 0$. Therefore:
$$ 
\frac{\E \left[ \tau_N \right]}{N^2} \underset{N \rightarrow \infty}{\longrightarrow} \E[\nu].
$$
This gives us the desired result  taking $\theta = \mathbbm{E}[\nu]$.

\end{proof}

\appendix

\section{Proof of Lemma \ref{lemma:sym}}\label{appendix-A}

For the sake of completeness, we prove here the exponential bounds of Lemma 1, for which were unable to find a relevant reference even though they must be common knowledge.

\begin{proof}[Proof of Lemma \ref{lemma:sym}]

We start with the upper bound and then proceed to the lower bound. Fix some $c > 1$. For any $x \in \llbracket -N, N \rrbracket$, a classical martingale argument (see for example Theorem 4.8.7  in \cite{durrett2019probability}) gives us:
    \begin{equation*}
        \E_x \left[\sigma_N \right] = (N-x)(x+N) \le N^2.
    \end{equation*}
    Hence, by Markov inequality:
    \begin{equation}\label{eq:markov_sym}
        \P_x \left(\sigma_N > cN^2 \right) \le \frac{\E_x \left[ \sigma_N \right]}{cN^2}\le \frac{1}{c}.
    \end{equation}
    For $t \geq 0$, it follows that:
    \begin{equation*}
        \P(\sigma_N > t) \leq \P \left(\sigma_N \ge \left\lfloor \frac{t}{cN^2}\right\rfloor cN^2 \right) \le \left(\frac{1}{c}\right)^{\left\lfloor \frac{t}{cN^2}\right\rfloor} \le \left(\frac{1}{c}\right)^{\frac{t}{cN^2} - 1} = C_3e^{-C_4\frac{t}{N^2}},
    \end{equation*}
    with $C_3 = c$ and $C_4 = \log(c)/c$. Notice that, for $t \geq cN^2$, the second inequality above is a consequence of \eqref{eq:markov_sym} together with the strong Markov property, whereas it is trivial for $t < c N^2$, since in that case $\left(\frac{1}{c}\right)^{\left\lfloor \frac{t}{cN^2} \right\rfloor} = 1$.

    Now, to obtain the lower bound, fix another $c\in (0,1)$. By Kolmogorov's maximal inequality\footnote{See, for example, Theorem 2.5.5 in \cite{durrett2019probability}.} we have:
    \begin{equation*}
        \P\left(\max_{1\le i \le \left\lfloor cN^2 \right\rfloor} |S_i|\ge N\right) \le \frac{\Var \left(S_{\left\lfloor cN^2\right\rfloor} \right)}{N^2} \le c.
    \end{equation*}
    Then
    \begin{equation*}
        \P \left(\sigma_N > cN^2 \right) = \P\left(\max_{1\le i \le \left\lfloor cN^2 \right\rfloor} |S_i|< N\right)= 1 - \P\left(\max_{1\le i \le \left\lfloor cN^2 \right\rfloor} |S_i|\ge N\right)\ge 1 - c \eqcolon \delta.
    \end{equation*}
    Now let $\nu_0 = 0$ and for $i \geq 1$:
    $$
        \tau_i = \inf\{n\ge \nu_{i-1}: |S_n| = N\} \; \; \; \text{ and } \; \; \; \nu_i = \inf\{n\ge \tau_i: S_n\in \{0,2N\}\}.
    $$
    Define the following events:
    $$
        E_i = \{\tau_i - \nu_{i-1} > cN^2 \}\, \text{ and } \, F_i = \{S_{\nu_i} = 0 \},
    $$
    and observe that
    $$
        \bigcap_{i=1}^k(E_i\cap F_i)\subset \{\sigma_{2N} > kcN^2 \}.
    $$
    Since
    $$
    \P\left(E_1\cap F_1\right) = \P_0 \left(\sigma_{N} > cN^2\right) \P_N \left(\sigma_0 < \sigma_{2N} \right) \geq \frac{\delta}{2},
    $$
    the strong Markov's property implies:
    \begin{equation*}
        \P(\sigma_{2N} > kcN^2) \ge  \left(\P\left(E_1\cap F_1\right)\right)^k \ge \left(\frac{\delta}{2}\right)^k.
    \end{equation*}
    
    From there it is not hard obtain some constants $C_1$ and $C_2$ such that 
    $$
        \mathbb{P}(\sigma_N  > t)\ge C_1e^{-C_2\frac{t}{N^2}}, \, \text{ for all } N\ge 2 \text{ and } t\ge 1.
    $$ 
\end{proof}

\noindent{\bf Acknowledgements.} We are deeply grateful to Leonardo T. Rolla for a particularly fruitful discussion during the development of this work, as well as for his insightful comments and valuable suggestions on the final manuscript.
 
\bibliographystyle{acm}

\bibliography{reference}	

\end{document}